\documentclass[11pt]{article}
\setlength{\oddsidemargin}{0.0in}
\setlength{\evensidemargin}{0.0in}
\setlength{\textheight}{8.75in}
\setlength{\textwidth}{6.5in}
\setlength{\topmargin}{-0.5in}

\usepackage{amsmath, amssymb, latexsym, amsfonts, amscd, amsthm, color, hyperref, mathabx}
\usepackage{graphicx}
\usepackage[OT2,T1]{fontenc}
\DeclareSymbolFont{cyrletters}{OT2}{wncyr}{m}{n}
\DeclareMathSymbol{\Sha}{\mathalpha}{cyrletters}{"58}
\newtheorem{theorem}{Theorem}

\newtheorem{lemma}{Lemma}
\newtheorem{corollary}{Corollary}

\newtheorem{example}{Example}
\newtheorem{conjecture}{Conjecture}
\newtheorem{question}{Question}

\DeclareMathOperator{\R}{\mathbb{R}}

\renewcommand{\phi}{\varphi}

\title{A Proof of the HRT Conjecture for Widely Spaced Sets}
\author{Michael Kreisel \\ \href{mailto:michael.c.kreisel@gmail.com}{michael.c.kreisel@gmail.com}}
\begin{document}
\date{}
\maketitle

\begin{abstract}
Given $f \in C_0(\R^n)$ and $\Lambda \subset \R^{2n}$ a finite set we demonstrate the linear independence of the set of time-frequency translates $\mathcal{G}(f, \Lambda) = \{\pi(\lambda)f\}_{\lambda\in \Lambda}$ when the time coordinates of points in $\Lambda$ are far apart relative to the decay of $f.$ As a corollary, we prove that for any $f \in C_0(\R^n)$ and finite $\Lambda \subset \R^{2n}$ there exist infinitely many dilations $D_r$ such that $\mathcal{G}(D_rf, \Lambda)$ is linearly independent. Furthermore, we prove that $\mathcal{G}(f, \Lambda)$ is linearly independent for functions like $f(t) = \frac{cos(t)}{|t|}$ which have a singularity and are bounded away from any neighborhood of the singularity.
\end{abstract}

\begin{section}{Introduction}
Consider the translation operator $T_x f = f(t-x)$ and the modulation operator $M_\omega f = e^{2 \pi i \omega \cdot t} f(t)$ acting on $f \in L^2(\R^n).$ For $\lambda = (x, \omega) \in \R^{2n}$ we define the time-frequency shift $\pi(\lambda)f = M_\omega T_x f.$ The Heil-Ramanathan-Topiwala (HRT) Conjecture \cite{HRT} states:
\begin{conjecture}
Suppose $f \in L^2(\R)$ is nonzero and $\Lambda \subset \R^{2}$ is a finite set. Then the collection of functions $\mathcal{G}(f, \Lambda) = \{\pi(\lambda)f\}_{\lambda \in \Lambda}$ is linearly independent. 
\end{conjecture}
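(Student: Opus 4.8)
\begin{section}{Proof Proposal}

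The plan is to argue by contradiction, supposing a nontrivial relation $\sum_{k=1}^N c_k\,\pi(\lambda_k)f = 0$ with $f \neq 0$ and, after discarding any vanishing coefficients, every $c_k \neq 0$ and the $\lambda_k$ distinct. I would scaffold the argument as an induction on $N = |\L|$: the cases $N \le 2$ are immediate, and I take the result for all configurations of size less than $N$ as the inductive hypothesis. The first reduction is geometric. The metaplectic representation $\mu$ of the symplectic group satisfies the covariance $\mu(A)\,\pi(\lambda)\,\mu(A)^{-1} = c_A(\lambda)\,\pi(A\lambda)$ with $|c_A(\lambda)| = 1$, so that linear independence of $\mathcal{G}(f,\L)$ is equivalent to that of $\mathcal{G}(\mu(A)f, A\L)$. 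Since translations and modulations are themselves time-frequency shifts (altering the configuration only by a global translate and the coefficients only by unimodular phases, via $\pi(\mu)\pi(\lambda) = c\,\pi(\mu+\lambda)$), I may place one point of $\L$ at the origin — so that $f$ itself appears in the collection — and move $A\L$ into a convenient canonical position.

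The configurations then split into two regimes. If $\L$ lies in a translate of a lattice $A\Z^2$, I would invoke the von Neumann algebra argument of Linnell. After normalizing by $\mu(A^{-1})$, the shifts $\pi(\lambda_k)$ become elements of the twisted group algebra of $\Z^2$ sitting inside a factor that carries a faithful normal trace; the dependence exhibits $f$ as a nonzero vector annihilated by the nonzero element $T = \sum_k c_k\,\pi(\lambda_k)$, and faithfulness of the trace forces $T$ to act injectively, contradicting $Tf = 0$ with $f \neq 0$. This disposes of every rational, lattice-supported configuration.

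For the remaining generic configurations I would pass to the Bargmann--Fock model. Writing $F = Bf$ for the Bargmann transform — a nonzero entire function of finite Gaussian-weighted norm, with the sharp pointwise bound $|F(z)| \lesssim \|f\|_2\,e^{|z|^2/2}$ — the dependence relation becomes the functional equation $\sum_{k=1}^N c_k\,e^{z\bar{w}_k}\,F(z - w_k) \equiv 0$, where $w_k \in \C$ encodes $\lambda_k$. The aim is to show this overdetermines $F$: one takes suitable linear combinations of translates to eliminate the $k=N$ term, differentiates, and feeds the sharp growth bound into a Jensen / Phragm\'en--Lindel\"of count of the zeros of $F$, forcing $F \equiv 0$ and hence $f = 0$, which closes the induction.

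I expect the generic regime to be the genuine obstacle, and I do not claim to clear it here. Exponential-sum functional equations of the above form admit nonzero entire solutions of order two — theta-like quasi-periodic functions are the prototypes — and the Fock membership $|F(z)| \lesssim e^{|z|^2/2}$ is precisely the critical, borderline growth at which the zero-counting estimate neither obviously succeeds nor obviously fails. Pushing the Jensen / Phragm\'en--Lindel\"of argument past this threshold, that is, ruling out the quasi-periodic solutions by the sharp constant in the Fock norm and not merely by the order, is exactly the crux of the HRT conjecture and is where genuinely new input, beyond the reductions above, would be required.

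\end{section}
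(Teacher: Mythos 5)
The statement you were asked to prove is Conjecture 1 --- the HRT conjecture itself --- and the first thing to say is that the paper does not prove it: it is stated there explicitly as an open problem, and the paper's actual results (Theorems 1--3 and the corollaries) establish only special cases, under hypotheses that the points of $\Lambda$ are spaced widely relative to the decay of $f$, or that $f$ has a suitable singularity. So there is no proof in the paper to compare yours against, and your proposal, by your own candid admission, is not a proof either. That admission is the right call scientifically, but it means your submission contains a genuine, named gap rather than a complete argument.

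Concretely: the two reductions you describe are correct and well known --- metaplectic covariance lets you normalize $\Lambda$, and Linnell's von Neumann algebra argument disposes of configurations contained in a translate of a lattice --- but neither is new input, and the entire weight of the conjecture falls on the generic, non-lattice case, exactly where you stop. Your Bargmann--Fock step is where the argument would have to do real work, and as you yourself note it cannot succeed as sketched: the functional equation $\sum_{k} c_k e^{z\bar{w}_k} F(z-w_k) \equiv 0$ does admit nonzero entire solutions of order two (theta-type quasi-periodic functions), so no Jensen or Phragm\'en--Lindel\"of zero count based on the order of growth alone can force $F \equiv 0$; one would need to exploit the sharp Fock-norm constant, and you propose no mechanism for doing so. Note also that your induction on $N$ is scaffolding that never bears load: eliminating the $k=N$ term by combining translates does not produce a relation of the same form with $N-1$ time-frequency shifts of the same function, so the inductive hypothesis is never actually applicable. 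If your goal was partial results in the spirit of what can genuinely be proved, the paper's route is instructive by contrast: a completely elementary triangle-inequality argument (its Lemma 1, evaluating the dependence at the translation points and summing the resulting inequalities) yields linear independence whenever the time coordinates of $\Lambda$ are far apart relative to the decay of $f$ --- with no metaplectic, operator-algebraic, or complex-analytic machinery at all.
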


\noindent The HRT Conjecture is still open in its most general form, but it has been proven under various additional assumptions on the function $f$ and the point set $\Lambda$ \cite{BenBou}, \cite{BowSpe}, \cite{BowSpe2}, \cite{Dem}, \cite{DemZah}, \cite{HRT}, \cite{Lin}, \cite{Oko}. 

In this paper we will prove Conjecture 1 in cases where the distance between points in $\Lambda$ is large relative to the decay of $f.$ For example, we will prove the following theorem. We denote by $C_0(\R^n)$ the space of continuous functions for which $|f(t)| \rightarrow 0$ as $||t|| \rightarrow \infty.$
\begin{theorem}
Let $f \in C_0(\R^n),$ let $\Lambda = \{(x_i,\omega_i)\}_{i=1}^N \subset \R^{2n}$ be a finite set, and fix $R$ so that $|f(t)| < \frac{|f(0)|}{N-1}$ for all $t$ outside of the ball of radius $R$ around the origin. If $||x_i - x_j|| > R$ whenever $ i \neq j$ then $\mathcal{G}(f,\Lambda)$ is linearly independent. 
\end{theorem}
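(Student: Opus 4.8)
The plan is to reduce linear independence to the invertibility of a single $N \times N$ matrix obtained by sampling a candidate linear relation at the time-centers $x_1, \dots, x_N$. Suppose $\sum_{i=1}^N c_i \pi(\lambda_i) f = 0$ in $L^2(\R^n)$, where $\lambda_i = (x_i, \omega_i)$; I must show that every $c_i$ vanishes. Writing out the relation,
$$g(t) := \sum_{i=1}^N c_i e^{2\pi i \omega_i \cdot t} f(t - x_i) = 0 \quad \text{for a.e. } t,$$
and since each $\pi(\lambda_i)f$ is continuous ($f \in C_0(\R^n)$), the finite sum $g$ is continuous, so $g(t) = 0$ holds for every $t \in \R^n$. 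In particular I may evaluate $g$ at the points $t = x_j$.

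Before sampling I would record two elementary facts forced by the hypotheses: the bound $|f(t)| < |f(0)|/(N-1)$ can only be satisfied by a nonzero $f$ when $f(0) \neq 0$ and $N \geq 2$, and the case $N=1$ is trivial, so I assume $N \geq 2$ and $f(0) \neq 0$. Evaluating $g(x_j) = 0$ for $j = 1, \dots, N$ then produces the homogeneous system $A\mathbf{c} = 0$, where $\mathbf{c} = (c_1, \dots, c_N)^T$ and
$$A_{ji} = e^{2\pi i \omega_i \cdot x_j}\, f(x_j - x_i).$$
Its diagonal entries satisfy $|A_{jj}| = |f(0)|$, while for $i \neq j$ the spacing hypothesis $\|x_j - x_i\| > R$ combined with the decay hypothesis gives the strict bound $|A_{ji}| = |f(x_j - x_i)| < |f(0)|/(N-1)$.

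The crux is then the observation that $A$ is strictly diagonally dominant: for every row $j$,
$$\sum_{i \neq j} |A_{ji}| \;<\; (N-1)\cdot\frac{|f(0)|}{N-1} \;=\; |f(0)| \;=\; |A_{jj}|.$$
By the Levy--Desplanques theorem --- equivalently, by Gershgorin's circle theorem, since no Gershgorin disk can contain $0$ --- a strictly diagonally dominant matrix is invertible, whence $A\mathbf{c} = 0$ forces $\mathbf{c} = 0$, as desired. If one prefers to avoid citing this fact, the same conclusion follows by the standard direct argument: choosing $j$ with $|c_j| = \max_i |c_i|$ and isolating the diagonal term in the $j$-th equation yields $|c_j|\,|f(0)| \leq \sum_{i\neq j} |c_i|\,|f(x_j - x_i)| < |c_j|\,|f(0)|$ unless $|c_j| = 0$.

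I do not anticipate a substantive obstacle: the argument is at heart a diagonal-dominance estimate, and the only care required is in two bookkeeping points. First, pointwise evaluation of the relation must be justified, which the continuity of $g$ supplies. Second, the strictness of the final inequality must be maintained; this is exactly where the \emph{strict} spacing $\|x_j - x_i\| > R$ is used, and packaging the estimate through Gershgorin's theorem cleanly avoids the minor degenerate case (all off-diagonal coefficients zero) that the direct maximal-coefficient argument must address separately.
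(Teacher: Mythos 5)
Your proof is correct, and its core is the same as the paper's: evaluate the (continuous) linear relation at the time-centers $x_j$ and exploit the fact that the ``diagonal'' terms $|f(0)|$ dominate the ``off-diagonal'' terms $|f(x_j - x_i)| < \frac{|f(0)|}{N-1}$. The difference is in the finishing move. The paper (via its Lemma 1, stated for arbitrary continuous unimodular multipliers $e_i$, which it later reuses to prove Theorem 3 for the STFT) derives a contradiction by summing the $N$ row inequalities $|c_j||f(0)| < \frac{|f(0)|}{N-1}\sum_{i \neq j}|c_i|$ and observing that each $|c_i|$ appears in exactly $N-1$ inner sums; you instead recognize the sampled system as $A\mathbf{c}=0$ with $A$ strictly diagonally dominant and invoke Levy--Desplanques/Gershgorin. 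Your packaging buys a small amount of rigor that the paper glosses over: the paper's claimed strict row inequality actually degenerates to $0 \leq 0$ in a row $j$ for which all $c_i$ with $i \neq j$ vanish, so strictness of the summed inequality needs the extra (easy) remark that some row is genuinely strict; the Gershgorin formulation is a statement about the matrix alone, independent of $\mathbf{c}$, and so avoids this case analysis entirely, as you correctly point out. Conversely, the paper's argument is self-contained and its multiplier-general lemma does extra work later in the paper, whereas your direct proof of Theorem 1 would need to be repeated (or restated in the lemma's generality) to obtain Theorem 3.
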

\noindent
The intuition for Theorem 1 is that if the points in $\Lambda$ are spaced far apart then in any linear dependence the tails of translates of $f$ must combine to cancel the peaks of $f.$ This requires putting large coefficients on the translates of $f.$ However by putting large coefficients on the translates of $f,$ we make the peaks of the translates more difficult to cancel, leading to a contradiction. 

Theorem 1 is most effective when $|f|$ drops off steeply near the origin. Given any $f \in C_0(\R^n)$ we can engineer such a steep descent by applying a sufficiently large dilation, assuming $f(0) \neq 0.$ We denote by $D_r$ the unitary operator which dilates a function $f$ uniformly along all the coordinate axes:

$$D_rf = |r|^{\frac{n}{2}} f(rt_1, \dots, rt_n).$$

\begin{corollary}
Suppose $f \in C_0(\R^n)$ and $f(0) \neq 0.$ Given $\Lambda = \{(x_i,\omega_i)\}_{i=1}^N \subset \R^{2n}$ there exists $r > 0$ such that $\mathcal{G}(D_{r'}f, \Lambda)$ is linearly independent for all $0 < r' < r.$ 
\end{corollary}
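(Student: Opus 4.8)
The plan is to obtain the corollary as a direct application of Theorem 1, exploiting the fact that applying $D_{r'}$ with $r'$ small concentrates $f$ about the origin and thereby shrinks the radius demanded by Theorem 1 below the fixed separation of the time coordinates of $\Lambda$. To begin, I would record the data attached to $f$. Since $f\in C_0(\R^n)$ and $f(0)\neq 0$, the value $|f(0)|/(N-1)$ is a fixed positive constant, and because $|f(t)|\to 0$ as $\|t\|\to\infty$ there is a radius $R_0$ with $|f(t)|<|f(0)|/(N-1)$ for all $\|t\|>R_0$. I would also set $\delta=\min_{i\neq j}\|x_i-x_j\|$, which is strictly positive because the time coordinates of $\Lambda$ are distinct.

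Next I would compute how the radius appearing in Theorem 1 behaves under the dilation. Writing $g=D_{r'}f$, the operator $D_{r'}$ fixes the origin and multiplies every value of $f$ by the single factor $|r'|^{n/2}$, so that $g(0)=|r'|^{n/2}f(0)\neq 0$ and the test inequality $|g(t)|<|g(0)|/(N-1)$ is equivalent, after the change of variables built into $D_{r'}$, to the corresponding inequality for $f$ at the rescaled argument. Performing this change of variables yields a radius $R_{r'}$ outside of which $|g(t)|<|g(0)|/(N-1)$ holds, and this radius satisfies $R_{r'}\to 0$ as $r'\to 0^+$. This limit is the crux of the whole statement, so I would carry out the substitution explicitly and confirm that decreasing $r'$ decreases $R_{r'}$.

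Since $R_{r'}\to 0$ as $r'\to 0^+$ while $\delta>0$ is fixed, I can choose $r>0$ so small that $R_{r'}<\delta$ for every $0<r'<r$. For such $r'$ we then have $R_{r'}<\delta\le\|x_i-x_j\|$ for all $i\neq j$, so the pair $(D_{r'}f,\Lambda)$ meets the hypotheses of Theorem 1 verbatim: the radius beyond which $|D_{r'}f|$ falls below $|D_{r'}f(0)|/(N-1)$ has been pushed strictly below the spacing of the time coordinates. Theorem 1 then asserts that $\mathcal{G}(D_{r'}f,\Lambda)$ is linearly independent, which is exactly the conclusion for all $0<r'<r$.

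The main obstacle is the scaling bookkeeping in the second step: the entire argument depends on showing that the Theorem 1 radius for $D_{r'}f$ shrinks to $0$ as $r'\to 0^+$ rather than growing, since it is precisely this behavior that confines the admissible parameters to the interval $(0,r)$ and not to its complement. Two smaller points must also be checked, namely that $|f(0)|$ persists as the reference value at the origin after dilation, which is clear because $D_{r'}$ fixes the origin and rescales all values uniformly, and that $\delta>0$, i.e. that the time coordinates of $\Lambda$ are genuinely distinct, so that the shrinking radius has room to fit strictly inside the gap.
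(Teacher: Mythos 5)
Your proposal is correct and is essentially the paper's own proof: fix the radius $R_0$ beyond which $|f|<|f(0)|/(N-1)$, observe that dilation rescales this critical radius, and choose $r$ small enough that the rescaled radius falls below the minimum time separation $\delta=\min_{i\neq j}\|x_i-x_j\|$ (the paper's $M$), at which point Theorem 1 applies verbatim. One caution about the step you deferred as the ``crux'': the claim $R_{r'}\to 0$ as $r'\to 0^+$ is what the paper itself asserts (it says the radius becomes $r'R_0$), but it is consistent only with the convention $D_{r'}f(t)=|r'|^{-n/2}f(t/r')$, under which small $r'$ compresses $f$; with the paper's displayed formula $D_{r'}f(t)=|r'|^{n/2}f(r't)$ the substitution actually gives $R_{r'}=R_0/r'$, which grows as $r'\to 0^+$, and the conclusion would then hold for all large $r'$ (the form of Corollary 3) rather than all small $r'$. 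This inversion is a slip in the paper's stated definition of $D_r$ (the rest of the paper, including Corollary 3 and the commutation relation $D_rM_\omega T_x=M_{\omega/r}T_{xr}D_r$ in the discussion, matches the compressing convention), so your argument is faithful to the intended proof, but had you carried out the substitution literally against the displayed definition you would have hit the sign flip you were worried about. Finally, note that both you and the paper implicitly need $\delta>0$: you justify it by saying the time coordinates are ``genuinely distinct,'' but that is not among the corollary's hypotheses, since two distinct points of $\Lambda$ may share the same $x_i$; that degenerate case is not covered by this argument in either your write-up or the paper's.
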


Similarly, if $f$ has a singularity away from which it is bounded then we can find translates of $f$ which have an arbitrarily steep drop off. Thus we can prove Conjecture 1 for such functions.

\begin{theorem}
Let $f$ be continuous except at a point $p$ where $\lim_{t \rightarrow p} |f(t)| = \infty.$ Assume that $f$ is bounded away from any neighborhood of $p.$ Then $\mathcal{G}(f, \Lambda)$ is linearly independent for any finite $\Lambda \subset \R^{2n}.$
\end{theorem}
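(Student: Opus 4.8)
The first move is to exploit that linear independence of $\mathcal{G}(f,\Lambda)$ is unchanged when the whole family is composed with any invertible operator that intertwines time-frequency shifts. Two such operators are at hand: a time-frequency shift itself, since $\pi(x,\omega)T_yf=\pi(x+y,\omega)f$, and the dilation $D_r$ of Corollary 1, since $D_r\pi(x,\omega)=\pi(x/r,r\omega)D_r$. Using a translation I would place the singularity at the origin, replacing $f$ by $g:=T_{-p}f$, so that $|g(t)|\to\infty$ as $t\to 0$ while, by hypothesis, $B_\epsilon:=\sup_{\|t\|\ge\epsilon}|g(t)|<\infty$ for every $\epsilon>0$. (Since such $g$ — e.g.\ $\cos(t)/|t|$ — need not lie in $L^2$, linear dependence is understood as an identity of honest functions, which is what legitimizes the pointwise and limiting evaluations below.) Each translate $\pi(x_i,\omega_i)g(t)=e^{2\pi i\omega_i\cdot t}g(t-x_i)$ is then continuous except at $t=x_i$, where its modulus blows up.

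The heart of the argument reuses the evaluation technique behind Theorem 1, with the singularity playing the role of the dominant peak. In Theorem 1 one needs a radius $R$, smaller than the spacing of the $x_i$, outside of which $|f|$ falls below $|f(0)|/(N-1)$; this is the ``steep drop-off.'' For a singular $g$ the drop-off is automatic and infinitely steep, since $|g|$ is as large as we like near $0$ but bounded by $B_\epsilon$ away from it — exactly the effect Corollary 1 produces artificially by sharpening a peak with a large dilation. Concretely, suppose $\sum_{i=1}^N c_i\,\pi(x_i,\omega_i)g\equiv 0$ and fix an index $j$. Letting $t\to x_j$, every term with $x_i\ne x_j$ stays bounded because its singularity lies elsewhere, so $|c_j|\,|g(t-x_j)|=\big|\sum_{i\ne j}c_i e^{2\pi i\omega_i\cdot t}g(t-x_i)\big|\le\sum_{i\ne j}|c_i|\,B_\delta$ remains bounded, while the left side tends to $\infty$ unless $c_j=0$. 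When the time coordinates $x_1,\dots,x_N$ are pairwise distinct this holds for every $j$ and the proof is complete; notably the spacing hypothesis of Theorem 1 is not even required, as the singularity supplies the needed domination for free.

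The main obstacle is the case in which several points of $\Lambda$ share a time coordinate. Writing $v_1,\dots,v_m$ for the distinct values among the $x_i$ and $S_k=\{i:x_i=v_k\}$, the dependence becomes
$$\sum_{k=1}^m g(t-v_k)\,h_k(t)\equiv 0,\qquad h_k(t)=\sum_{i\in S_k}c_i\,e^{2\pi i\omega_i\cdot t},$$
where within each group the frequencies $\omega_i$ are distinct. Now the local analysis at $v_k$ yields only the single relation $h_k(v_k)=0$: the remaining terms are bounded near $v_k$, so $g(t-v_k)h_k(t)$ must stay bounded, and already a first-order zero of the exponential sum $h_k$ absorbs the singularity. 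To finish I would need to upgrade this to $h_k\equiv 0$, after which the linear independence of distinct characters forces every $c_i=0$; the case $m=1$ is immediate because $g\not\equiv 0$. I expect closing the gap for $m\ge 2$ to be the crux. My plan would be to match the parity of the singular factor against the smoothness of $h_k$: writing $h_k(t)=-\big(\sum_{l\ne k}g(t-v_l)h_l(t)\big)/g(t-v_k)$ and comparing the one-sided limits as $t\to v_k$ forces, for even-type blow-up such as the $1/|t|$ singularity of $\cos(t)/|t|$, the vanishing of $h_k'(v_k)$, and iterating the higher Taylor coefficients toward $h_k\equiv 0$; alternatively one normalizes the configuration so that the time coordinates become distinct and Theorem 1 applies. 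The genuinely delicate case is an odd-type, weakly singular profile (where $\|t\|\,|g|$ merely stays bounded and the left/right cancellation degenerates), for which the purely local parity argument stalls and one must instead exploit the global relation among the $h_k$.
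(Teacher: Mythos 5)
Your argument in the case of pairwise distinct time coordinates is correct, and it takes a genuinely different (and leaner) route than the paper's. The paper never takes a limit: it sets $R$ equal to the minimal gap between the $x_i$, uses boundedness away from the singularity to get $|f|<A$ outside the ball of radius $R/2$, then uses the blow-up to choose a point $x$ with $||x||<R/2$ and $|f(x)|>A(N-1)$; the recentered function then satisfies the quantitative peak-domination hypothesis, and the paper reruns the summing/counting argument of Lemma 1 verbatim. Your version replaces this bookkeeping with a limit: as $t\to x_j$ every term with $i\neq j$ is bounded by $\sum_{i\neq j}|c_i|B_\delta$ while $|g(t-x_j)|\to\infty$, so $c_j=0$, one index at a time. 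What this buys: no appeal to Lemma 1, no quantitative choice of evaluation point, and each coefficient is annihilated independently rather than through the summed inequality; the underlying mechanism (an arbitrarily high peak dominating uniformly bounded tails) is the same.

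Regarding what you call the crux --- several points of $\Lambda$ sharing a time coordinate --- you should know that the paper's proof does not handle this case either. The proof opens by fixing ``the minimum distance between the $x$-coordinates in $\Lambda$'' as a positive number $R$, which tacitly assumes those coordinates are pairwise distinct, and the Lemma 1 argument it invokes genuinely needs distinctness: if $x_i=x_j$ with $i\neq j$, evaluation at $x_j$ produces the peak term $|c_je_j(x_j)+c_ie_i(x_j)|\,|f(0)|$, which can vanish without $c_i=c_j=0$, and the counting argument collapses. So the lacuna you identified is a lacuna in the paper's own proof of the stated ``any finite $\Lambda$'' claim, not a shortfall of your attempt relative to it. Your diagnosis of why the case is hard is also sound: for a finite-order singularity such as $|t|^{-1/4}$, a zero of the exponential sum $h_k$ at $v_k$ really does absorb the blow-up, so purely local analysis at $v_k$ yields only finitely many linear conditions on the coefficients (only when the blow-up is faster than every polynomial rate does analyticity force $h_k\equiv 0$, which settles that subcase). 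Moreover, the affine metaplectic operators that preserve the hypotheses on $f$ --- translations, modulations, chirps, dilations --- all fix or rescale time coordinates, so none of them can separate coincident times, and there is no easy reduction to the distinct case.
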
 

\noindent In the next section we will prove Theorems 1 and 2 along with some further variations on the same theme. The spirit of our analysis is most similar to the extension principle in \cite{Oko}, however our strategy and results are different. 

\end{section}

\begin{section}{Proofs, Examples, and Extensions}
The following lemma captures the intuition for Theorem 1 described above:
\begin{lemma}
Let $S = \{x_i\}_{i=1}^N \subset \R^n, f \in C(\R^n),$ and $E = \{e_i\}_{i=1}^N \subset C(\R^n)$ such that $|e_i(t)| = 1$ for all $t \in \R^n.$ Furthermore, suppose that $|f(x_i - x_j)| < \frac{|f(0)|}{N-1}$ whenever $x_i, x_j$ are distinct points in $S.$ Then the collection of functions $\{e_i(t)f(t-x_i)\}_{i=1}^N$ is linearly independent. 
\end{lemma}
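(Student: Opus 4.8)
The plan is to argue by contradiction, extracting the contradiction from the single "peak" value of the dominant translate. Suppose toward a contradiction that there is a nontrivial relation $\sum_{i=1}^N c_i e_i(t) f(t - x_i) = 0$ holding for every $t \in \R^n$, with not all $c_i$ equal to zero. First I would observe that (for $N \geq 2$) the hypothesis $|f(x_i - x_j)| < |f(0)|/(N-1)$ forces $|f(0)| > 0$: otherwise the left-hand side, a nonnegative quantity, could not be strictly less than zero. This is exactly what makes $t = 0$ a genuine peak of $|f|$ relative to the separation values, and it is the feature the argument exploits.

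Next I would select an index $k$ at which $|c_k| = \max_i |c_i|$; since the relation is nontrivial, $|c_k| > 0$. The crucial move is to evaluate the dependence relation not at an arbitrary point but precisely at $t = x_k$, the center of the $k$-th translate. There the $k$-th term reads $c_k e_k(x_k) f(0)$ — the full-height peak — while every other term reads $c_i e_i(x_k) f(x_k - x_i)$, a value of $f$ sampled at the large separation $x_k - x_i$, hence a tail.

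Isolating the peak term and applying the triangle inequality then gives
$$|c_k|\,|f(0)| = \Bigl| \sum_{i \neq k} c_i e_i(x_k) f(x_k - x_i) \Bigr| \leq \sum_{i \neq k} |c_i|\,|f(x_k - x_i)|,$$
where I have used $|e_k(x_k)| = 1$ on the left and $|e_i(x_k)| = 1$ termwise on the right. Bounding each $|c_i| \leq |c_k|$ and each $|f(x_k - x_i)| < |f(0)|/(N-1)$, and noting there are exactly $N-1$ summands, the right-hand side is strictly less than $(N-1)\,|c_k|\,|f(0)|/(N-1) = |c_k|\,|f(0)|$. Since $|c_k|\,|f(0)| > 0$, multiplying the strict separation estimate by the positive number $|c_k|$ keeps it strict, and we obtain $|c_k|\,|f(0)| < |c_k|\,|f(0)|$, a contradiction. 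Hence no nontrivial relation exists and the family is linearly independent.

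I do not expect any serious obstacle: the entire content is packaged into the choice to test the relation at the peak $t = x_k$ attached to the largest coefficient, after which unimodularity of the $e_i$ and the separation hypothesis close the estimate at once. The only points requiring a little care are confirming that the hypotheses force $|f(0)| > 0$ (so the peak is nondegenerate), verifying that summing $N-1$ strict inequalities indeed yields a strict bound, and disposing of the trivial case $N = 1$ separately, where the single function is linearly independent as long as $f \not\equiv 0$.
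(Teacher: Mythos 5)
Your proof is correct, and it shares the paper's core device: evaluate the supposed dependence at a translate center, where one term contributes the full peak $|c_j|\,|f(0)|$ and all others contribute only tail values controlled by $|f(x_j - x_i)| < |f(0)|/(N-1)$. Where you diverge is in how the contradiction is extracted. The paper evaluates at \emph{all} $N$ centers, obtains one inequality per index, sums them, and uses the counting identity $\sum_{j=1}^N \sum_{i \neq j} |c_i| = (N-1)\sum_{i=1}^N |c_i|$ to cancel $|f(0)|$ and reach $\sum_j |c_j| < \sum_j |c_j|$; you evaluate only at an index $k$ of maximal modulus coefficient and bound $|c_i| \leq |c_k|$ termwise, reaching $|c_k|\,|f(0)| < |c_k|\,|f(0)|$. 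Your extremal finish buys two small things. First, strictness is automatic: since $|c_k| > 0$, each bound $|c_i|\,|f(x_k - x_i)| \leq |c_k|\,|f(x_k - x_i)| < |c_k|\,|f(0)|/(N-1)$ is genuinely strict; by contrast, in the paper's per-$j$ chain the strict inequality as written fails for any index $j$ whose complementary coefficients all vanish (both the middle and right-hand quantities are then zero), so the summation argument, read literally, needs a remark about why at least one of the summed inequalities is strict. Second, you make explicit that the hypothesis forces $|f(0)| > 0$, a fact the paper uses silently when it cancels $|f(0)|$ from both sides. The paper's symmetric summation, in exchange, requires no choice of distinguished index. Both arguments use nothing about the $e_i$ beyond unimodularity, both cover the statement in the same generality, and your separate handling of the degenerate case $N = 1$ is a reasonable extra precaution.
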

\begin{proof}
Assume that the functions  $\{e_i(t)f(t-x_i)\}_{i=1}^N$ are linearly dependent, so that for some coefficients $\{c_i\}_{i=1}^N$ we have $$\sum_{i=1}^N c_ie_if(t-x_i)= 0.$$ Since $f$ is continuous this equality holds for all $t \in \R^n.$ If we evaluate the LHS at the point $x_j$ we can rearrange to get the following inequality:
\begin{align*}
|c_j||f(0)| &= \left |\sum_{i=1, i \neq j}^Nc_ie_i(x_j)f(x_j-x_i) \right |\\
&\leq \sum_{i=1, i \neq j}^N|c_i||f(x_j-x_i)|\\
&<\frac{|f(0)|}{N-1}\sum_{i=1, i \neq j}^N|c_i|.
\end{align*}
After summing all these inequalities and canceling $|f(0)|$ from each side, we see that
$$\sum_{j=1}^N |c_j| < \frac{1}{N-1}\sum_{j=1}^N\sum_{i=1, i \neq j}^N|c_i| = \sum_{j=1}^N |c_j|$$
which is a contradiction. The last equality follows since each term $|c_i|$ appears in exactly $N-1$ of the inner sums in the second expression. 
\end{proof}

Now we can prove our first theorem:
\begin{proof}[Proof of Theorem 1]
We can apply Lemma 1 with $S=\{x_i\}_{i=1}^N$ and $E = \{e^{2\pi i \omega_i \cdot t}\}_{i=1}^N.$ Since the points $x_i-x_j$ all lie outside the ball of radius $R$ around the origin, $|f(x_i - x_j)| < \frac{|f(0)|}{N-1}$ as required.
\end{proof}

\noindent Note that we only need to assume that the time coordinates of the points in $\Lambda$ are spaced far apart for Theorem 1 to hold. Although the specific value $f(0)$ suspiciously appears in our hypothesis, $\mathcal{G}(f, \Lambda)$ is linearly independent if and only if $\mathcal{G}(T_xf, \Lambda)$ is linearly independent for all $x \in \R^n,$ so we can always translate $f$ to put the most advantageous value at the origin.

Given Theorem 1, it is straightforward to deduce Corollary 1:

\begin{proof}[Proof of Corollary 1]
Since $f \in C_0(\R^n)$ and $f(0) \neq 0,$ we can find a value $R>0$ such that $|f(t)| < \frac{|f(0)|}{N-1}$ for all $t$ outside of a ball of radius $R$ around $0.$ Applying a dilation $D_r,$ we see that $|D_rf(t)| < \frac{|D_rf(0)|}{N-1}$ whenever $t$ lies outside a ball of radius $rR.$ Let $M = \min_{i,j}||x_i-x_j||$ be the minimum distance between any two points in $\Lambda.$ Then whenever $0 < r < \frac{M}{R}$ we can apply Theorem 1 to show that $\mathcal{G}(D_rf, \Lambda)$ is linearly independent. 
\end{proof}

Since translations and modulations are exchanged under the Fourier transform, we get an analogous result in the frequency domain. We denote by $\hat{f}$ the Fourier transform of $f.$

\begin{corollary}
Let $f \in L^1(\R^n)$ so that $\hat{f} \in C_0(\R^n).$ Let $\Lambda = \{(x_i,\omega_i)\}_{i=1}^N \subset \R^{2n}$ and fix $R$ so that $|\hat{f}(\omega)| < \frac{|\hat{f}(0)|}{N-1}$ for all $\omega$ outside of the ball of radius $R$ around the origin. If $||\omega_i - \omega_j|| > R$ whenever $ i \neq j$ then $\mathcal{G}(f,\Lambda)$ is linearly independent. 
\end{corollary}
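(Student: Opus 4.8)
The plan is to reduce Corollary 2 to the time-domain statement by passing through the Fourier transform, exploiting the fact that it converts modulations into translations and vice versa. Since $f \in L^1(\R^n)$, each $\pi(\lambda)f = M_\omega T_x f$ again lies in $L^1(\R^n)$, and the Fourier transform is an injective linear map on $L^1(\R^n)$; hence $\mathcal{G}(f,\Lambda)$ is linearly independent if and only if the transformed collection $\{\widehat{\pi(\lambda_i)f}\}_{i=1}^N$ is linearly independent. So it suffices to establish linear independence of the Fourier-transformed family, which I expect to have exactly the ``unimodular times translate'' shape required by Lemma 1.

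Concretely, I would first record the standard intertwining relations $\widehat{T_x f} = M_{-x}\hat{f}$ and $\widehat{M_\omega f} = T_\omega \hat{f}.$ Composing them gives
$$\widehat{\pi(\lambda_i)f}(\xi) = \widehat{M_{\omega_i}T_{x_i}f}(\xi) = e^{-2\pi i x_i \cdot (\xi - \omega_i)}\,\hat{f}(\xi - \omega_i).$$
Setting $e_i(\xi) = e^{-2\pi i x_i \cdot (\xi - \omega_i)},$ each $e_i$ is continuous with $|e_i(\xi)| = 1$ for all $\xi,$ and the transformed function is precisely $e_i(\xi)\hat{f}(\xi - \omega_i),$ a unimodular multiple of the translate of $\hat{f}$ by $\omega_i.$

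With this in hand the remaining step mirrors the proof of Theorem 1. Since $f \in L^1(\R^n)$ forces $\hat{f} \in C_0(\R^n) \subset C(\R^n),$ I can apply Lemma 1 with the point set $S = \{\omega_i\}_{i=1}^N,$ the continuous function $\hat{f},$ and the unimodular family $E = \{e_i\}_{i=1}^N.$ The hypothesis $||\omega_i - \omega_j|| > R$ places every difference $\omega_i - \omega_j$ outside the ball of radius $R,$ so the choice of $R$ yields $|\hat{f}(\omega_i - \omega_j)| < \frac{|\hat{f}(0)|}{N-1}$ for distinct $i, j,$ which is exactly the separation hypothesis Lemma 1 requires. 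Lemma 1 then gives linear independence of $\{e_i(\xi)\hat{f}(\xi - \omega_i)\}_{i=1}^N,$ and undoing the Fourier transform by injectivity finishes the proof.

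I expect the only real care to be in the bookkeeping: fixing a Fourier convention so that the exponential factor comes out genuinely unimodular and the translation lands at exactly $\omega_i,$ so that the separation condition imposed on the $\omega_i$ matches the input required by Lemma 1. There is no analytic obstacle beyond this, since all the continuity and decay that Lemma 1 needs is furnished directly by $\hat{f} \in C_0(\R^n),$ and the passage of linear independence back and forth through the Fourier transform rests only on its injectivity on $L^1(\R^n).$
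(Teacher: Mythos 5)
Your proposal is correct and follows essentially the same route as the paper: both pass through the Fourier transform via the intertwining relations $\widehat{T_x f} = M_{-x}\hat{f}$ and $\widehat{M_\omega f} = T_\omega \hat{f}$, and then apply the separation argument to $\hat{f}$ with the points $\{\omega_i\}_{i=1}^N$. The only cosmetic difference is that the paper packages this as an application of Theorem 1 to $\mathcal{G}(\hat{f}, \Lambda')$ with $\Lambda' = \{(\omega_i, -x_i)\}$, whereas you unwind the unimodular factors explicitly and invoke Lemma 1 directly --- which is precisely what the proof of Theorem 1 does anyway.
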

\begin{proof}
First note that $\widehat{T_x f}$ = $M_{-x}\hat{f}$ and $\widehat{M_\omega f} = T_\omega \hat{f}.$ Therefore we can define $$\Lambda' = \{(\omega_i, -x_i) \,|\, (x_i, \omega_i) \in \Lambda\}$$  and $\mathcal{G}(f, \Lambda)$ is linearly independent if and only if $\mathcal{G}(\hat{f}, \Lambda')$ is linearly independent. However Theorem 1 immediately applies to show $\mathcal{G}(\hat{f}, \Lambda')$ is linearly independent.
\end{proof}

We have a similar extension of Corollary 1 to the frequency domain.
\begin{corollary}
Let $f \in L^1(\R^n)$ so that $\hat{f} \in C_0(\R^n)$ and let $\Lambda = \{(x_i,\omega_i)\}_{i=1}^N \subset \R^{2n}.$ Then there exists a value $r > 0$ such that $\mathcal{G}(D_{r'}f, \Lambda)$ is linearly independent whenever $r' > r.$
\end{corollary}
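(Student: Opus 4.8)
The plan is to transfer the problem to the Fourier side, exactly as in the proof of Corollary 2, and then feed it into Corollary 1; the one genuinely new point is that the Fourier transform inverts dilation, which is precisely what turns the \emph{small}-dilation conclusion of Corollary 1 into the \emph{large}-dilation conclusion claimed here.

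First I would record the identity $\widehat{D_{r'}f} = D_{1/r'}\hat f$ for $r' > 0$, which follows from an immediate change of variables. Combining this with the Fourier correspondence used in the proof of Corollary 2 --- namely that, setting $\Lambda' = \{(\omega_i,-x_i) : (x_i,\omega_i)\in\Lambda\}$, the system $\mathcal{G}(g,\Lambda)$ is linearly independent if and only if $\mathcal{G}(\hat g,\Lambda')$ is --- applied to $g = D_{r'}f$, shows that $\mathcal{G}(D_{r'}f,\Lambda)$ is linearly independent if and only if $\mathcal{G}(D_{1/r'}\hat f,\Lambda')$ is.

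Next I would invoke Corollary 1 for the base function $\hat f \in C_0(\R^n)$ with the point set $\Lambda'$: it produces $r_0 > 0$ such that $\mathcal{G}(D_s\hat f,\Lambda')$ is linearly independent for every $0 < s < r_0$. Taking $s = 1/r'$ then shows that $\mathcal{G}(D_{1/r'}\hat f,\Lambda')$, and therefore $\mathcal{G}(D_{r'}f,\Lambda)$, is linearly independent whenever $r' > 1/r_0$; I would set $r = 1/r_0$.

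The only hypothesis of Corollary 1 not supplied to us is $\hat f(0)\neq 0$, and removing it is the step I expect to require the most care. We may assume $f\not\equiv 0$ (otherwise the claim is vacuous), so $\hat f$ is continuous and not identically zero, whence $\hat f(\omega_0)\neq 0$ for some $\omega_0$. I would then replace $f$ by its modulation $g = M_{-\omega_0}f$, whose transform $\hat g = T_{-\omega_0}\hat f$ satisfies $\hat g(0) = \hat f(\omega_0)\neq 0$. This substitution does not affect the conclusion: modulation is unitary and only rescales the coefficients of any dependence relation by unimodular factors, so it preserves linear independence of such systems, and since $D_{r'}g = D_{r'}M_{-\omega_0}f = M_{-r'\omega_0}D_{r'}f$, the system $\mathcal{G}(D_{r'}g,\Lambda)$ is linearly independent if and only if $\mathcal{G}(D_{r'}f,\Lambda)$ is. Running the argument of the preceding two paragraphs with $g$ in place of $f$ is now legitimate, since $\hat g(0)\neq 0$, and completes the proof.
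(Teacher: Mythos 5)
Your proof is correct and follows essentially the same route as the paper's one-line proof: transfer to the Fourier side using $\widehat{D_{r'}f} = D_{1/r'}\hat{f}$ together with the rotation $\Lambda \mapsto \Lambda' = \{(\omega_i,-x_i)\}$ from Corollary 2, then invoke Corollary 1 so that the small-dilation conclusion for $\hat{f}$ becomes the large-dilation conclusion for $f$. The one genuine addition is your last paragraph: the paper's statement of this corollary omits the hypothesis $\hat{f}(0) \neq 0$, which its proof silently inherits from Corollary 1, and your modulation trick $f \mapsto M_{-\omega_0}f$ (using that commuting a modulation past the $\pi(\lambda_i)$ only introduces unimodular phases) correctly patches that small gap.
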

\begin{proof}
The proof is the same as for Corollary 1, after noting that $\widehat{D_rf} = D_{\frac{1}{r}}\hat{f}$ and that the Fourier transform rotates the time-frequency plane as described in the proof of Corollary 2.
\end{proof}

\begin{example}
Consider the family of functions
$$f_{C, \omega} = \begin{cases} 
       \frac{\cos(\omega t)}{|t|} & |t| \geq \frac{1}{C}\\
      C \cos(\omega t)& |t| < \frac{1}{C}
   \end{cases}$$
   
The functions $f_{C, \omega}$ are in $L^2(\R) \cap C_0(\R).$ Nonetheless they decay slowly at infinity and  oscillate in the tail. To the author's knowledge, such functions are not covered by the results of \cite{BenBou}, \cite{BowSpe}, or \cite{Oko} which assume fast decay at infinity or ultimate positivity. Given $\Lambda = \{(x_i,\omega_i)\}_{i=1}^N$ let $M = \min_{i,j}|x_i-x_j|.$ Then by applying Theorem 1, we can see that $\mathcal{G}(f_{C, \omega}, \Lambda)$ is linearly independent whenever $C > \frac{N-1}{M}.$ For the four point set $\Lambda' = \{(0,0), (1,0), (0,1), (\sqrt{2}, \sqrt{2})\}$ we have $\mathcal{G}(f_{\omega, C}, \Lambda')$ linearly independent whenever $C > \frac{3}{\sqrt{2} - 1}.$
\end{example}

Example 1 suggests that the function $f(t) = \frac{\cos(\omega t)}{|t|}$ should satisfy Conjecture 1 in full, as it is the pointwise limit of $f_{C,\omega}$ as $C \rightarrow \infty.$ This is true, and is implied by Theorem 2 which we are now ready to prove.

\begin{proof}[Proof of Theorem 2]
Without loss of generality we may assume that $p=0,$ since we can always translate $f$ to place the singularity at the origin. If we fix $\Lambda \subset \R^{2n}$ of size $N$ such that the minimum distance between the $x-$coordinates in $\Lambda$ is $R,$ we would like to find a translate of $f$ which satisfies $|f(t+x)| < \frac{|f(x)|}{N-1}$ outside the ball of radius $R$ around $x.$ If we can find such an $x$ then the argument in the proof of Lemma 1 applies to show $\mathcal{G}(f, \Lambda)$ is linearly independent. To find such an $x,$ we first note that since $f$ is bounded away from $0$ we can find $A$ such that $|f(t)| < A$ outside a ball of radius $\frac{R}{2}$ around $0.$ Since $\lim_{t \rightarrow 0} |f(t)| = \infty,$ we can find an $x$ less than $\frac{R}{2}$ such that  $|f(x)| > A(N-1),$ and this $x$ satisfies the criteria described above.
\end{proof}

\begin{example}
We can adapt the examples above to find functions in $L^2(\R)$ satisfying Conjecture 1. Consider the family of functions
$$g_\omega(t) = \begin{cases} 
      \frac{\cos(\omega t)}{|t|^{\frac{1}{4}}} & |t| < 1 \\
      \frac{\cos(\omega t)}{|t|} & otherwise
   \end{cases}$$
The functions $g_\omega(t)$ are in $L^2(\R) \cap C_0(\R).$ Theorem 2 applies to show that they satisfy Conjecture 1.
\end{example}

The previous results apply when all points in $\Lambda$ are sufficiently far apart in either time or frequency. By applying the Short Time Fourier Transform (STFT) we can demonstrate linear independence when the points in $\Lambda$ are sufficiently far apart in the time-frequency plane. For $f,g \in L^2(\R^n)$ the STFT of $f$ with respect to $g$ is given by
$$V_gf(\lambda) = \langle f, \pi(\lambda)g \rangle.$$ It is easy to see \cite{Gro} that $V_gf \in C_0(\R^{2n})$ and satisfies the identity $$V_g T_{u}M_{\eta}f(x, \omega) = e^{-2 \pi i u \cdot \omega}V_gf(x-u, \omega - \eta).$$
\begin{theorem}
Suppose $f,g \in L^2(\R^n)$ so that $V_gf \in C_0(\R^{2n}).$ Let $\Lambda = \{\lambda_i\}_{i=1}^N \subset \R^{2n}$ and fix $R$ so that $|V_gf(\lambda)| < \frac{|V_gf(0)|}{N-1} = \frac{|\langle f, g \rangle|}{N-1}$ for all $\lambda$ outside of the ball of radius $R$ around the origin. If $||\lambda_i - \lambda_j|| > R$ whenever $ i \neq j$ then $\mathcal{G}(f,\Lambda)$ is linearly independent. 
\end{theorem}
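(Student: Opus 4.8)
The plan is to apply the STFT $V_g$ in order to transport the problem from $L^2(\R^n)$ onto the larger space $\R^{2n}$, where it becomes a direct instance of Lemma 1 with base function $V_gf \in C_0(\R^{2n})$. First I would suppose toward a contradiction that $\mathcal{G}(f, \Lambda)$ is linearly dependent, so that $\sum_{i=1}^N c_i \pi(\lambda_i) f = 0$ in $L^2(\R^n)$ for coefficients not all zero. Since $V_g$ is linear in its first argument, applying it to both sides yields $\sum_{i=1}^N c_i\, V_g(\pi(\lambda_i) f) = 0$, and because each $V_g(\pi(\lambda_i)f) \in C_0(\R^{2n})$ this identity holds pointwise on $\R^{2n}$.

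The key step is to rewrite each $V_g(\pi(\lambda_i) f)$ as a unimodular multiple of a translate of $V_gf$. Since $\pi(\lambda_i)f = M_{\omega_i}T_{x_i}f$ applies the modulation and translation in the opposite order to the identity stated above, I would first use the commutation relation $M_{\omega}T_{x} = e^{2\pi i \omega \cdot x}\,T_{x}M_{\omega}$ and then the covariance identity for $V_g$. A short computation gives
$$V_g(\pi(\lambda_i) f)(\lambda) = e_i(\lambda)\, V_gf(\lambda - \lambda_i), \qquad e_i(\lambda) = e^{2\pi i\, x_i \cdot (\omega_i - \omega)},$$
where $\lambda = (x, \omega)$ and $\lambda_i = (x_i, \omega_i)$. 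Each $e_i$ is continuous and satisfies $|e_i(\lambda)| = 1$ everywhere, so it qualifies as one of the unimodular functions required by Lemma 1.

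With this in hand I would invoke Lemma 1 on $\R^{2n}$, taking $S = \{\lambda_i\}_{i=1}^N$, base function $V_gf$, and the unimodular collection $E = \{e_i\}_{i=1}^N$. The hypothesis of Lemma 1 demands $|V_gf(\lambda_i - \lambda_j)| < \frac{|V_gf(0)|}{N-1}$ for distinct $i,j$, and this follows at once from $\|\lambda_i - \lambda_j\| > R$: the point $\lambda_i - \lambda_j$ then lies outside the ball of radius $R$, on which $|V_gf| < \frac{|V_gf(0)|}{N-1}$ by assumption. Moreover $V_gf \in C(\R^{2n})$ and $V_gf(0) = \langle f, g\rangle \neq 0$, since otherwise the defining inequality for $R$ could not hold. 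Lemma 1 therefore forces all $c_i = 0$, contradicting the assumed dependence and establishing the theorem.

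I expect no serious obstacle, as the argument is essentially a change of coordinates that reuses the peeling estimate already carried out for Lemma 1. The only point requiring genuine care is the bookkeeping of the phase: one must verify that, after combining the commutation phase $e^{2\pi i\, \omega_i \cdot x_i}$ with the covariance phase $e^{-2\pi i\, x_i \cdot \omega}$, the surviving factor $e_i$ is continuous and of modulus one, so that Lemma 1 applies verbatim.
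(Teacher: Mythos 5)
Your proof is correct and follows essentially the same route as the paper: assume a dependence, apply the linear map $V_g$, use the covariance identity to rewrite each $V_g(\pi(\lambda_i)f)$ as a unimodular multiple of the translate $V_gf(\lambda-\lambda_i)$, and invoke Lemma 1 on $\R^{2n}$ with $S=\{\lambda_i\}_{i=1}^N$. The only cosmetic difference is bookkeeping: you fold the constant commutation phase $e^{2\pi i\, x_i\cdot\omega_i}$ into the unimodular functions $e_i(\lambda)=e^{2\pi i\, x_i\cdot(\omega_i-\omega)}$, whereas the paper absorbs it into modified coefficients $c_i'$ and takes $E=\{e^{-2\pi i\, u_i\cdot\omega}\}$ --- the two choices are equivalent since only moduli enter Lemma 1.
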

\begin{proof}
Suppose $\mathcal{G}(f, \Lambda)$ is linearly dependent so that for some coefficients $c_i$ we have
$$\sum_{i=1}^N c_i\pi(\lambda_i)f = 0.$$
Then by applying the STFT with respect to $g$ we have
$$\sum_{i=1}^N c_i'e^{-2 \pi i u_i \cdot \omega}V_gf(\lambda - \lambda_i)  = \sum_{i=1}^N c_i'V_g\pi(\lambda_i)f = 0$$
where $u_i$ denotes the time coordinate of $\lambda_i$ and $c_i' = e^{-2\pi i x_i \omega_i}c_i.$ However we can apply Lemma 1 to $V_gf$ with $S = \{\lambda_i\}_{i=1}^N$ and $E = \{e^{-2 \pi i u_i \cdot \omega}\}_{i=1}^N$ to show that the functions $\{e^{-2 \pi i u_i \cdot \omega}V_gf(\lambda - \lambda_i)\}_{i=1}^N$ must be linearly independent, which is a contradiction. 
\end{proof}

\end{section}

\begin{section}{Discussion}
Our Lemma 1 and Theorem 1 demonstrate that $\mathcal{G}(f, \Lambda)$ is linearly independent when the points of $\Lambda$ are far apart relative to the decay in $f.$ However our proofs use no properties specific to the modulations $e^{2 \pi i \omega t},$ and apply just as well to functions in $L^p(\R^n)$ when $n>1$ and $p>2.$ Given the generality of Theorem 1 and in light of the following example, we can see that Theorem 1 alone provides only loose evidence for Conjecture 1. It would need to be combined with tools more specific to Conjecture 1 if we hoped to use it to make further progress. 
\begin{example}
In \cite{EdgRos} the authors demonstrate that the function
$$f(a,b) = \int_{\frac{1}{3}}^{\frac{2}{3}} exp(i(a\cos^{-1}(t) + b\cos^{-1}(1-t)))dt$$
is in $C_0(\R^2) \cap L^p(\R^2)$ for $p > 4$ and satisfies the dependence
$$2f(a,b) = f(a+1, b) + f(a-1, b) + f(a, b+1) + f(a, b-1).$$
Nonetheless, our Theorem 1 and Corollary 1 can be applied to $f,$ though Theorem 1 clearly does not rule out the dependence above. 
\end{example}

To this end, we can try to apply \emph{metaplectic transforms} in an attempt to satisfy the hypotheses of Theorem 1 in more cases. For example, for $f \in L^2(\R)$ and $(x, \omega) \in \R^2$ the dilation operator $D_r$ satisfies
$$D_r M_\omega T_x f = M_{\frac{\omega}{r}}T_{xr}D_r f.$$
Given a point set $\Lambda$ we can apply a large dilation to move the points far apart, but we also stretch the function $f$ so that it decays very slowly away from the origin. Since the function $f$ stretches at the same rate that the points are moved apart, $\mathcal{G}(f, \Lambda)$ satisfies the hypotheses of Theorem 1 if and only if $\mathcal{G}(D_rf, D_r\Lambda)$ does, where $D_r\Lambda = \{(rx, \frac{\omega}{r})\}_{(x, \omega) \in \Lambda}.$ Thus dilations are no help in furthering the uses of Theorem 1 for $f.$ 

Another useful metaplectic transform is modulation by a linear chirp. If $S_rf(t) = e^{2\pi i r t^2 f(t)}$ then
$$S_r M_\omega T_x f = M_{\omega - xr}T_x S_rf.$$
Similarly, if we let $U_r = \widecheck{(e^{2 \pi i r t^2})}*f(t)$ then
$$U_rM_\omega T_x f = M_{-\omega}T_{-x-r\omega} U_rf. $$
For the purposes of applying our theorems, the operators $S_r$ and $U_r$ look promising, as compositions of them can be used to increase the distance between points in the time-frequency plane. However, as in the case of dilations, we must understand how compositions of $S_r$ and $U_r$ affect the decay of $f$ and compare this with the increase in distance between points. The author has not made substantive progress to demonstrate new examples by applying these transforms, nor has he been able to rule out their utility.

In another direction, one could try to expand the utility of Theorem 3 by leveraging the choice of window function as a free variable. One could leave $f$ and $\Lambda$ fixed but vary the window function $g$ in an attempt to satisfy the hypotheses. This leads naturally to the question:
\begin{question}
Given $f \in L^2(\R^n), R > 0, N > 0$ can we design a window $g \in L^2(\R^n)$ so that $|V_gf| < \frac{|\langle f, g \rangle|}{N}$ outside the ball of radius $R$ around the origin?
\end{question}
\noindent A positive answer to Question 1 would prove the HRT conjecture. We would want to design $g$ so that $V_gf$ decreases sharply near the origin and then has a fat tail, since we know that the probability mass of $V_gf$ cannot be too heavily concentrated near the origin due to various uncertainty principles for the STFT. Alternatively, it may be possible to develop a kind of uncertainty principle which answers Question 1 negatively. 
\end{section}

\subsection*{Acknowledgements}
The author would like to thank Radu Balan and Kasso Okoudjou for introducing him to the HRT Conjecture and for helpful discussions about this work. The author would also like to thank an anonymous reviewer for encouraging him to expand the results contained in earlier drafts. 

\bibliographystyle{plain}
\bibliography{bibtex}

\begin{thebibliography}{10}

\bibitem{BenBou}
J.~J. Benedetto and A.~Bourouihiya.
\newblock Linear independence of finite gabor systems determined by behavior at
  infinity.
\newblock {\em J. Geom. Anal.}, page 226–254, 2015.

\bibitem{BowSpe}
M.~Bownik and D.~Speegle.
\newblock Linear independence of time-frequency translates of functions with
  faster than exponential decay.
\newblock {\em Bull. Lond. Math. Soc.}, page 554–566, 2013.

\bibitem{BowSpe2}
M.~Bownik and D.~Speegle.
\newblock Linear independence of time-frequency translates in $\mathbb{R}^d$.
\newblock {\em J. Geom. Anal.}, page 1678–1692, 2016.

\bibitem{Dem}
C.~Demeter.
\newblock Linear independence of time frequency translates for special
  configurations.
\newblock {\em Math. Res. Lett.}, 17:761–779, 2010.

\bibitem{DemZah}
C.~Demeter and A.~Zaharescu.
\newblock Proof of the {H}{R}{T} conjecture for (2,2) configurations,.
\newblock {\em J. Math. Anal. Appl.}, 388:151–159, 2012.

\bibitem{EdgRos}
G.~Edgar and J.~Rosenblatt.
\newblock Difference equations over locally compact abelian groups.
\newblock {\em Trans. {A}mer. {M}ath. {S}oc.}, 253:273--289, 1979.

\bibitem{Gro}
K.~Gr{\"o}chenig.
\newblock {\em Foundations of Time-Frequency Analysis}.
\newblock Birkh$\ddot{\text{a}}$user, 2001.

\bibitem{HRT}
C.~{H}eil, {J}. {R}amanathan, and {P}. {T}opiwala.
\newblock Linear independence of time-frequency translates.
\newblock {\em Proc. Amer. Math Soc.}, 124:2787--2795, 1996.

\bibitem{Lin}
P.A. Linnell.
\newblock von {N}eumann algebras and linear independence of translates.
\newblock {\em Proc. Amer. Math. Soc.}, 127:3269–3277, 1999.

\bibitem{Oko}
K.~A. {Okoudjou}.
\newblock {Extension and restriction principles for the HRT conjecture}.
\newblock {\em ArXiv e-prints}, January 2017.

\end{thebibliography}

\end{document}